\documentclass[12pt,reqno]{amsart}
\usepackage[a4paper, hmargin={2.7cm,2.7cm},vmargin={3.3cm,3.3cm}]{geometry}
\usepackage[english]{babel}
\usepackage{color}
\usepackage[noadjust]{cite}
\usepackage{amsmath}
\usepackage{amssymb}
\usepackage{amsfonts}
\usepackage{amsthm}
\usepackage{enumerate}
\usepackage{amsthm}
\usepackage{dsfont}
\usepackage{xcolor}

\usepackage{mathtools}
\mathtoolsset{showonlyrefs}

\usepackage[textwidth=20mm]{todonotes}
\usepackage[hypertexnames=false]{hyperref}
\usepackage[nameinlink]{cleveref}
\usepackage{commath}
\usepackage{esint}
\usepackage{etoolbox}

\usepackage{amssymb}   % Extra symbols
\usepackage{amsthm}    % Theorem-like environments
\usepackage{thmtools}  % Theorem-like environments
\usepackage{mathtools} % Fonts and environments for mathematical formuale
\usepackage{mathrsfs}  % Scfript font with \mathscr{}
\usepackage{commath}
\usepackage{bbm}
\usepackage[textwidth=20mm]{todonotes}

\theoremstyle{plain}
\newtheorem{theorem}{Theorem}[section]
\newtheorem{lemma}[theorem]{Lemma}
\newtheorem{proposition}[theorem]{Proposition}

\newtheorem{conjecture}[theorem]{Conjecture}

\theoremstyle{definition}

\theoremstyle{remark}

\numberwithin{equation}{section}

\usepackage[all]{xy}

\makeatletter
\providecommand*{\cupdot}{%
  \mathbin{%
    \mathpalette\@cupdot{}%
  }%
}
\newcommand*{\@cupdot}[2]{%
  \ooalign{%
    $\m@th#1\cup$\cr
    \sbox0{$#1\cup$}%
    \dimen@=\ht0 %
    \sbox0{$\m@th#1\cdot$}%
    \advance\dimen@ by -\ht0 %
    \dimen@=.5\dimen@
    \hidewidth\raise\dimen@\box0\hidewidth
  }%
}

\providecommand*{\bigcupdot}{%
  \mathop{%
    \vphantom{\bigcup}%
    \mathpalette\@bigcupdot{}%
  }%
}
\newcommand*{\@bigcupdot}[2]{%
  \ooalign{%
    $\m@th#1\bigcup$\cr
    \sbox0{$#1\bigcup$}%
    \dimen@=\ht0 %
    \advance\dimen@ by -\dp0 %
    \sbox0{\scalebox{2}{$\m@th#1\cdot$}}%
    \advance\dimen@ by -\ht0 %
    \dimen@=.5\dimen@
    \hidewidth\raise\dimen@\box0\hidewidth
  }%
}
\makeatother

\newcommand{\N}{\mathbb{N}}
\newcommand{\Z}{\mathbb{Z}}

\newcommand{\R}{\mathbb{R}}

\DeclareMathOperator{\diag}{diag}

\title[Wavelet orthonormal bases for nonexpansive dilations]{Wavelet orthonormal  bases \\ for nonexpansive dilations}

\author{Jordy Timo van Velthoven}
\address{Faculty of Mathematics,
University of Vienna, 
Oskar-Morgenstern-Platz 1,
1090 Vienna, Austria}
\email{jordy-timo.van-velthoven@univie.ac.at}

\author{Felix Voigtlaender}
\address{
Mathematical Institute for Machine Learning and Data Science (MIDS),
Catholic University of Eichstätt–Ingolstadt (KU),
Auf der Schanz 49, 85049 Ingolstadt, Germany
}

\email{felix.voigtlaender@ku.de}

\keywords{Calder\'on sum, nonexpansive dilation, orthonormal basis, wavelet set}

\subjclass[2020]{42C40}

\begin{document}

\maketitle

\begin{abstract}
We construct wavelet orthonormal bases for  nonexpansive dilations and integer translations that do not admit a wavelet set. In addition, these orthonormal bases do not satisfy the Calder\'on sum formula. In particular, we  show the existence of a wavelet basis for a dilation matrix with determinant one.
\end{abstract}

\section{Introduction}
For matrices $A, P \in \mathrm{GL}(d, \R)$ and a function $\psi \in L^2 (\R^d)$, the associated wavelet system is 
\begin{align} \label{eq:discrete_wavelet}
\big\{ |\det(A)|^{-j/2} \psi (A^{-j} \cdot -  \gamma) \big \}_{j \in \mathbb{Z}, \gamma \in P \mathbb{Z}^d}.
\end{align}
A common assumption in the study of such wavelet systems is that the dilation matrix is expansive, i.e., all its eigenvalues have absolute value strictly greater than one. Under this assumption, characterizations of orthonormal bases and general Parseval frames have been obtained in, e.g., \cite{chui2002characterization, calogero2000characterization, bownik2000characterization, rzeszotnik2001calderon}. More generally, such characterizations were shown to hold for dilation matrices that are expanding on a subspace in  \cite{hernandez2002unified}, amplifying dilations \cite{laugesen2002translational}, or dilations satisfying the lattice counting estimate \cite{bownik2017wavelets}. 
However, for general dilation matrices, characterizations of wavelet systems forming an orthonormal basis or Parseval frame are not known. 
In particular, this has led to the following conjecture, which was implicitly raised in \cite[p. 177]{speegle2003existence} and \cite{bownik2004spectral}, and explicitly stated as \cite[Conjecture 1]{bownik2017wavelets} and as the open problem \cite[Problem 3.3]{bownik2020open}.

\begin{conjecture}[\cite{bownik2017wavelets, bownik2020open, bownik2004spectral, speegle2003existence}] \label{conj:wavelet}
Let $A, P \in \mathrm{GL}(d, \mathbb{R})$ and $\psi \in L^2 (\mathbb{R}^d)$. Suppose that \[ \{|\det(A)|^{-j/2} \psi (A^{-j} \cdot - \gamma) \}_{j \in \mathbb{Z}, \gamma \in P \mathbb{Z}^d}\]
is an orthonormal basis, or more generally a Parseval frame, for $L^2 (\mathbb{R}^d)$. Then the Calder\'on sum formula holds:
\begin{align} \label{eq:calderon0}
\sum_{j \in \mathbb{Z}} |\widehat{\psi} ((A^t)^j \xi)|^2 = |\det(P)| \quad \text{for a.e.} \quad \xi \in \mathbb{R}^d.
\end{align}
\end{conjecture}

The identity \eqref{eq:calderon0} is part of the known characterizations of wavelet systems forming orthonormal bases or Parseval frames under the aforementioned assumptions on the dilation matrices. A solution of Conjecture~\ref{conj:wavelet} would therefore be a substantial step
in the understanding of general wavelet systems forming orthonormal bases or Parseval frames. It was recently shown in \cite{enstad2026calderon} that Conjecture~\ref{conj:wavelet} is true under an additional localization assumption on the wavelet function.

Although the understanding of general wavelet orthonormal bases is far from complete, the recent breakthrough \cite{bownik2026simultaneous} yielded a complete characterization of the matrices $B \in \mathrm{GL}(d, \R)$ and full-rank lattices $\Lambda \subseteq \R^d$ admitting a wavelet set; see also \cite{ionascu2006simultaneous} for the characterization in dimension two. Here, we recall that a measurable set $W \subseteq \R^d$ is a \emph{wavelet set} for the pair $(B, \Lambda)$ if both
\[
\big\{ B^j W : j \in \Z \big\} \quad \text{and} \quad \big\{ W + \lambda : \lambda \in \Lambda \big\}
\]
are measurable tilings of $\R^d$ up to null sets. If $W \subseteq \R^d$ is a wavelet set for $(B, \Lambda)$, then the function $\psi \in L^2 (\R^d)$ with Fourier transform $\widehat{\psi} = |W|^{-1/2} \cdot \mathds{1}_W$ defines a wavelet orthonormal basis for $L^2 (\R^d)$ of the form \eqref{eq:discrete_wavelet} for the transpose matrix $A:= B^t$ of $B$ and the dual lattice $\Gamma := \Lambda^*$ of $\Lambda$, see, e.g., \cite{wang2002wavelets, dai1998wandering}. In \cite[Conjecture 1.2]{bownik2026simultaneous}, the following conjecture on the relation between the existence of general orthonormal bases and wavelet sets was made; see also \cite{larson2007unitary, bownik2026msf} for stronger forms of this conjecture.

\begin{conjecture}[\cite{bownik2026simultaneous}] \label{con:waveletset}
    For each pair $(A, \Gamma)$ for which there exists a wavelet orthonormal basis $
\{ |\det(A)|^{-j/2} \psi (A^{-j} \cdot -  \gamma) \big \}_{j \in \mathbb{Z}, \gamma \in \Gamma}$ for $L^2 (\R^d)$, there exists a $(B, \Lambda)$ wavelet set, where $B$ is the transpose of $A$ and $\Lambda$ the dual lattice of $\Gamma$.
\end{conjecture}

In the present paper, we construct wavelet orthonormal bases for which the Calderon sum formula \eqref{eq:calderon0} fails and for whose dilation matrix there does not exist an associated wavelet set, showing that both Conjecture~\ref{conj:wavelet} and Conjecture~\ref{con:waveletset} are false. Our main result is the following theorem.

\begin{theorem} \label{thm:main}
Let $a \geq 1$ and $A_a := \diag(2a, 1/2 )$. There exists $\psi^{(a)} \in L^2 (\R^2)$ such that the wavelet system
$
\big\{ a^{-j/2} \psi^{(a)} (A_a^{-j} \cdot -  \gamma) \big \}_{j \in \mathbb{Z}, \gamma \in  \mathbb{Z}^2}
$
is an orthonormal basis for $L^2 (\R^2)$ satisfying
\begin{align} \label{eq:calderon}
\sum_{j \in \mathbb{Z}} |\widehat{\psi^{(a)}} ((A_a^t)^j \xi)|^2 \leq \frac{1}{3} \quad \text{for a.e.} \quad \xi \in \mathbb{R}^2.
\end{align}
The pair $(A_a^t, \Z^2) = (A_a, \Z^2)$ does not admit a wavelet set.
\end{theorem}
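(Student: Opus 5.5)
The plan is to build $\psi^{(a)}$ in tensor form, with the dilation-by-$1/2$ direction carrying the multiscale structure. The second coordinate of $A_a$ is the one-dimensional dyadic dilation run backwards. So I will start from the Haar decomposition $L^2(\R)=\bigoplus_{j\in\Z}W_j$, where $W_j=\overline{\operatorname{span}}\{2^{j/2}h(2^j\cdot-l)\}_{l\in\Z}$. This gives
\[
L^2(\R^2)=\bigoplus_{j\in\Z} L^2(\R)\otimes W_j .
\]
For fixed $j$, the elements of the system with that index $j$ are
\[
(2a)^{-j/2}f_\ast\big((2a)^{-j}x-k\big)\cdot 2^{j/2}g_\ast(2^jy-l)
\]
for the building blocks of $\psi^{(a)}$. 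If they are to exhaust $L^2(\R)\otimes W_j$, the single integer shift in $x$ at scale $(2a)^j$ must generate all of $L^2(\R)$. That is impossible for one function, since a shift-invariant space generated by a single function has spectral function at most $1$ but needs more to fill $L^2(\R)$.

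So I will not use a pure tensor $f\otimes h$. Instead I will let $\psi^{(a)}$ mix the $y$-scales: $\psi^{(a)}$ is a combination of pieces living in $L^2(\R)\otimes W_{m}$ for several $m$. This lets the one index $j$ feed several Haar levels at once, and the surplus of $y$-translations, inherited from the compression $y\mapsto 2^jy$, is traded for the missing $x$-translations. Concretely, I will split $\Z$ (the $y$-translation index) into residue classes modulo $2$ or $3$. Each class will be matched, via a unimodular modulation in the first frequency variable, to a different $\Z$-coset in the $x$-frequency. The $x$-dilation factor $2a\ge 2$ is what makes the scale budget work: the scale $(2a)^{j}$ grows at least as fast as the $2^{j}$ lost in $y$.

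Second, I will verify orthonormality and completeness on the Fourier side. I will compute $\langle D^jT_\gamma\psi,D^{j'}T_{\gamma'}\psi\rangle$ through the periodization of $\widehat\psi\,\overline{\widehat\psi((A_a^t)^{j'-j}\cdot)}$, and check that only the term $j=j'$, $\gamma=\gamma'$ survives. Completeness will follow by reassembling the pieces into the block decomposition above, using that each block is an orthonormal basis of $L^2(\R)\otimes W_j$.

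Third, for the Calder\'on sum \eqref{eq:calderon}, the mixing will give $\widehat{\psi^{(a)}}$ pointwise modulus squared spread over three disjoint frequency regions, each carrying weight at most $1/3$. Their images under $(A_a^t)^j$ will be pairwise disjoint, because the second coordinate contracts while the first expands. Summing over $j$ then gives at most $1/3$. The ONB property is not contradicted: orthonormality needs the $\Z^2$-periodization $\sum_{k}|\widehat\psi(\xi+k)|^2=1$, and this is achieved by the three lattice-translated copies rather than by dilates.

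Finally, for the nonexistence of a wavelet set I will split into two cases. If $a=1$, then $\det A_a=1$. A set $W$ tiling by $\Z^2$ has $|W|=1$, while $\{A_1^jW\}_{j\in\Z}$ would be disjoint sets of measure $1$ covering $\R^2$. Since $\R^2$ has infinite measure this is not yet a contradiction, so I will instead use a Poincaré-recurrence argument. The map $A_1$ induces a measure-preserving map on the torus $\R^2/\Z^2$ identified with $W$, and disjointness of the dilates contradicts recurrence, or alternatively I will cite the necessary condition $|\det B|\neq 1$ from \cite{bownik2026simultaneous}. For $a>1$, I will invoke the characterization in \cite{bownik2026simultaneous} and check that its arithmetic condition fails for $\diag(2a,1/2)$ with $\Z^2$. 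I expect the failing condition to concern the contracting eigenvalue $1/2$ on the rational subspace $\R e_2$ relative to the lattice.

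The main obstacle is the first step: choosing the precise mixing so that the pieces of $\psi^{(a)}$ are mutually orthogonal across different $j$ and still complete. I would first try the case $a=1$ with a three-fold residue split, and then adapt it to general $a$ by rescaling the $x$-building block.
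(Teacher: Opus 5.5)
\textbf{There is a genuine gap: the proposal never defines $\psi^{(a)}$, and the mechanism it sketches cannot produce an orthonormal basis.}

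The ``mixing'' of $y$-scales, which you yourself flag as the main obstacle, is the whole content of the theorem. So neither the orthonormal basis property nor the bound \eqref{eq:calderon} is established.

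\textbf{Why the three-region picture fails.} Your derivation of \eqref{eq:calderon} bounds the sum over $j$ by the pointwise weight $\frac13$. That requires the dilates $(A_a^t)^{j}S$ of the whole support $S$ of $\widehat{\psi^{(a)}}$ to be pairwise disjoint. If only the orbit of each of your three regions is disjoint, you get only the bound $3\cdot\frac13=1$. Now suppose these dilates are pairwise disjoint and the system is an orthonormal basis.
\begin{enumerate}
\item $L^2(\R^2)$ is the orthogonal sum of the spaces $V_j=\overline{\operatorname{span}}\{\psi^{(a)}_{j,\gamma}\}_{\gamma}$, and elements of $V_j$ have Fourier support in the pairwise disjoint sets $(A_a^t)^{-j}S$.
\item Hence every $f$ with $\operatorname{supp}\widehat f\subseteq S$ lies in $V_0$.
\item $V_0$ is a principal shift-invariant space, so its elements satisfy $\widehat f=m\,\widehat{\psi^{(a)}}$ with $m$ $\Z^2$-periodic. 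This forces $S$ to be a $\Z^2$-packing.
\item Then $\sum_k|\widehat{\psi^{(a)}}(\xi+k)|^2=1$ gives $|\widehat{\psi^{(a)}}|=\mathds{1}_S$, with $S$ tiling both by $\Z^2$ and by the dilations $(A_a^t)^j$. So $S$ is a wavelet set for $(A_a,\Z^2)$.
\end{enumerate}
This contradicts the last assertion of the theorem, and also your bound $|\widehat{\psi^{(a)}}|^2\leq\frac13$. So the dilates must overlap (infinitely often, in fact), and the ``three lattice-translated copies'' picture is not viable.

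Your completeness step also has a problem. It assumes each block $L^2(\R)\otimes W_j$ is exhausted by the level-$j$ elements. That contradicts your first step, where you mix $y$-scales precisely because this is impossible.

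\textbf{The missing idea: non-additivity of density for infinite coset partitions of $\Z$.}
\begin{itemize}
\item The paper first builds a one-dimensional orthonormal basis of translates $\{T_{2^n\ell}g_n\}_{n\geq2,\ell\in\Z}$ with $\widehat{g_n}(\omega)=e^{-2\pi i t_n\omega}\mathds{1}_{[m_n,m_n+1)}(\omega)$.
\item To do this, it partitions $\{2,3,\dots\}$ into infinite sets $I_m$, $m\in\Z$, with $I_0=2\N$. For each $m$ it uses the F\"uhr--Lemvig lemma to choose $t_n$ with $\Z=\bigcupdot_{n\in I_m}(t_n+2^n\Z)$, even though $\sum_{n\in I_m}2^{-n}<1$.
\item It then sets $\widehat{\psi^{(a)}}(\omega,\eta)=\sum_{n\geq2}2^{-n/2}\mathds{1}_{E_n}(\omega)\widehat{g_n}(2^{-n}\eta)$, where the $E_n$ are the $(2a)$-adic shells partitioning $(-\frac12,\frac12)\setminus\{0\}$.
\item Since $\mathcal F_x\psi^{(a)}$ is supported in $|\omega|<\frac12$, Parseval in the $x$-translation index $k$ applies fiberwise in $\omega$.
\item For $\omega$ in shell $s$, the pairs $(j,\ell)$ with $j\leq s-2$ become $(n,\ell)=(s-j,\ell)$. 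So the wavelet Parseval identity reduces to the one-dimensional basis applied to a rescaled fiber of $\mathcal F_x f$.
\item The Calder\'on sum at $(\omega,\eta)$ equals $\sum_{n\in I_m}2^{-n}\leq\frac13$, where $m$ is determined by $2^{-s}\eta\in[m,m+1)$.
\end{itemize}
Your proposal does not identify this mechanism.

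\textbf{Wavelet-set part.} For $a>1$ your treatment matches the paper, since $\ker(A_a-\frac12 I)\cap\Z^2=\{0\}\times\Z\neq\{0\}$. For $a=1$, citing the known $|\det|=1$ obstruction is right. The Poincar\'e-recurrence sketch as written does not work, because $\diag(2,\frac12)$ does not preserve $\Z^2$ and so induces no map of the torus.
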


In particular, Theorem \ref{thm:main} shows the existence of a wavelet orthonormal basis for the dilation matrix $A = \diag(2, 1/2)$. The question whether an orthonormal basis for a dilation matrix of determinant one exists was mentioned as an open problem in \cite{bownik2017wavelets}. It is well-known that for such a matrix the Calder\'on  
sum formula \eqref{eq:calderon0} cannot hold (cf. \cite[Lemma 3.5]{bownik2017wavelets}) and that there cannot exist a wavelet set (cf. \cite[Section 3]{laugesen2002characterization} or \cite[Theorem 4]{larson2006explicit}). The fact that none of the pairs in Theorem \ref{thm:main} admits a wavelet
set follows from the characterizations in \cite{ionascu2006simultaneous, bownik2026simultaneous}; see Proposition \ref{prop:no_wavelet_set} for details.

The construction of the orthonormal bases of Theorem~\ref{thm:main} starts with an orthonormal basis for $L^2 (\R)$ of the form $\{g_n(\cdot - 2^n \ell) :n\geq2,\ \ell \in\Z\}$.
This is obtained by an adaptation of the
construction of Bownik and Rzeszotnik
\cite[Example~3.2]{bownik2004spectral},
which showed that a Calder\'on-type identity cannot in general hold for a generalized shift-invariant system as studied in \cite{hernandez2002unified}. In order to obtain the wavelet basis for $L^2 (\R^2)$,
we then place dilates of the functions $g_n$ on disjoint intervals in the
first partial Fourier variable. An elementary Fourier analytic argument allows us to reduce the
wavelet Parseval identity to the orthonormal basis identity for the basis $\{g_n(\cdot - 2^n \ell) :n\geq2,\ \ell \in\Z\}$.
The same construction allows the Calder\'on sum to be computed explicitly.

\subsection*{Notation}
The Fourier transform of a function $f \in L^1 (\R^d)$ is normalized as
$
\widehat f(\xi)
=
\int_{\R^d} f(x)e^{-2\pi i x\cdot \xi}\,dx,
$
where $x \cdot \xi$ denotes the standard dot product of vectors $x, \xi \in \R^d$.
We also denote by $\widehat{f}$ the usual extension of the Fourier transform to a function $f \in L^2(\R^d)$. The usual norm on $L^2 (\R^d)$ is denoted by $\| \cdot \|_2$.
We write $\mathcal F_x$
for the partial Fourier transform in the first variable. For
$x\in\R^d$, we write
$
T_x f=f(\,\cdot-x)
$
for translation of a function $f \in L^2 (\R^d)$. If $E\subseteq\R^d$ is measurable, we denote its
indicator function by $\mathds{1}_E$ and its Lebesgue measure by $|E|$.
For a full-rank lattice $\Lambda\subseteq\R^d$, its dual lattice is
$
\Lambda^*
=
\{\xi\in\R^d:\xi\cdot\lambda\in\Z
\text{ for all }\lambda\in\Lambda\}.
$
We write $A^t$ for the transpose of a matrix $A$.

\section{An orthonormal basis of translates} \label{sec:translation}
In this section, we construct the one-dimensional orthonormal basis needed
in the proof of Theorem~\ref{thm:main}. We first mention the following result of refinement of lattices, which is a special case of
\cite[Lemma~4.4]{fuehr2019system}.

\begin{lemma}[\cite{fuehr2019system}]  \label{lem:cosets}
Let $I\subseteq\{2,3,\ldots\}$ be infinite. For each $n \in I$, one can choose  $t_n \in \Z$,
 such that
\begin{equation}\label{eq:cosets}
\Z=\bigcupdot_{n\in I}(t_n+2^n\Z).
\end{equation}
\end{lemma}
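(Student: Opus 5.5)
We construct the residues $t_n$ greedily, processing the elements of $I$ in increasing order. Throughout, we keep track of a set of integers that are not yet covered and maintain that this set is still a union of residue classes of a suitable modulus. At each step we also make sure that the smallest (in absolute value) uncovered integer is covered eventually.

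Enumerate $I=\{n_1<n_2<\cdots\}$, and fix an enumeration $z_1,z_2,\ldots$ of $\Z$ (for instance $0,1,-1,2,-2,\ldots$). We will choose $t_{n_1},t_{n_2},\ldots$ inductively so that three properties hold.

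\begin{enumerate}[(i)]
\item The classes $t_{n_k}+2^{n_k}\Z$, $k\le m$, are pairwise disjoint.
\item The uncovered set $U_m:=\Z\setminus\bigcup_{k\le m}(t_{n_k}+2^{n_k}\Z)$ is a nonempty union of residue classes modulo $2^{n_m}$.
\item Every $z_i$ with $i$ at most the number of ``covering steps'' so far lies outside $U_m$.
\end{enumerate}

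To start, note that $U_0=\Z$ is a union of classes modulo $2^{n_1}$, and it contains at least $2^{n_1}\ge 4$ classes.

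Now suppose $U_{m-1}$ is a union of classes modulo $2^{n_{m-1}}$, hence also modulo $2^{n_m}$ since $n_m>n_{m-1}$. Let $z$ be the first element of the enumeration lying in $U_{m-1}$. Choose $t_{n_m}:=z$. The class $z+2^{n_m}\Z$ is contained in $U_{m-1}$, because $U_{m-1}$ is a union of classes mod $2^{n_m}$. This gives disjointness (i). The new uncovered set $U_m=U_{m-1}\setminus(z+2^{n_m}\Z)$ is again a union of classes mod $2^{n_m}$.

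It remains to see that $U_m$ is nonempty, so the process never stops prematurely. For this we track density. Let $\delta_m$ denote the density of $U_m$. Each step removes density $2^{-n_m}$. Since the $n_k$ are distinct integers $\ge 2$,
\[
\sum_k 2^{-n_k}\le \sum_{n\ge2}2^{-n}=\tfrac12<1.
\]
Hence $\delta_m\ge 1/2>0$ for all $m$, and $U_m\neq\emptyset$.

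Since every step covers the least uncovered element of the enumeration, an induction shows that $z_1,\dots,z_m$ are all covered after $m$ steps. Indeed, after step $m$ the first uncovered index strictly increases, so it is at least $m+1$. Therefore $\bigcup_{k}(t_{n_k}+2^{n_k}\Z)=\Z$, and together with (i) this gives the disjoint union \eqref{eq:cosets}.

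The step needing care is the compatibility requirement: the chosen class must lie entirely inside the uncovered set. This is exactly where the increasing order of the moduli $2^{n_m}$, each dividing the next, is used. The density estimate is only needed to guarantee the process never stalls. It is in fact automatic, since a nonempty union of classes mod $2^{n_{m-1}}$ minus one class mod $2^{n_m}$ with $n_m>n_{m-1}$ is still nonempty. So the argument goes through even without the bound $n\ge2$; that assumption merely keeps the density positive in the limit.
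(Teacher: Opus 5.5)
Your greedy construction is correct and complete. The moduli form a divisibility chain $2^{n_1}\mid 2^{n_2}\mid\cdots$, so the uncovered set $U_{m-1}$ is a union of classes modulo $2^{n_m}$. Hence the class of the first uncovered element $z$ lies inside it, which gives disjointness. $U_m$ stays nonempty, either by your density count or simply because $n_m>n_{m-1}$. Finally, the index of the first uncovered element of the enumeration strictly increases at every step, so every integer is eventually covered.

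The paper itself gives no proof here: it quotes the statement as a special case of \cite[Lemma~4.4]{fuehr2019system}. Your argument therefore replaces a citation by a short self-contained proof rather than competing with a different proof in the paper. The citation points to a more general result. Your proof shows that nothing beyond the nested structure of the cosets and an enumeration of $\Z$ is needed. The same argument works for any strictly decreasing chain of subgroups $\Gamma_1\supsetneq\Gamma_2\supsetneq\cdots$ of a countable group $G$ with $\Gamma_1\neq G$.

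Two minor remarks on your closing comments. First, dropping $n\geq 2$ is fine only as long as $0\notin I$, since $2^0\Z=\Z$ would exhaust everything at the first step. Second, the bound $\delta_m\geq \frac12$ says nothing about the limit set: $\lim_m\delta_m=1-\sum_{n\in I}2^{-n}\geq\frac12$ although $\bigcap_m U_m=\emptyset$. This failure of countable additivity of density is exactly what the paper exploits in Step~2 of the proof of Theorem~\ref{thm:parseval}. There the Calder\'on sum equals $\sum_{n\in I_m}2^{-n}<1$, even though the cosets $t_n+2^n\Z$, $n\in I_m$, tile $\Z$.
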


Let
$\beta:\mathbb{N}_0 \to\Z$ be the bijection defined by
\[
\beta(0)=0,\qquad \beta(2r-1)=r,\qquad\beta(2r)=-r, \quad r\geq1.
\]
For $q\in\N$, put
$\nu_2(q)=\max\{r\in \mathbb{N}_0:2^r\text{ divides }q\}$, and define
\begin{equation}\label{eq:partition}
m_n := \beta\big(\nu_2(n-1)\big),\qquad
I_m := \{n\geq2:m_n=m\}, \quad m\in\Z.
\end{equation}
Then the sets $I_m$ are infinite and partition $\{2,3,\ldots\}$.
More precisely,
\begin{equation}\label{eq:partition_explicit}
I_{\beta(r)}=\{1+2^r(2q+1):q\in\mathbb{N}_0 \},\qquad r\in \mathbb{N}_0.
\end{equation}
In particular,
$
I_0=\{2,4,6,\ldots\} = 2\mathbb{N}
$.

The following proposition provides the orthonormal basis of translates for $L^2 (\R)$ 
that we will use in Section \ref{sec:wavelet} for the wavelet construction. 

\begin{proposition}\label{prop:translation}
There exist functions $g_n\in L^2(\R)$, $n \in \mathbb{N} \setminus \{1\}$, with
$\|g_n\|_2=1$ and $|\widehat g_n|\leq1$, such that 
the system of translates
$
\big \{T_{2^n \ell}g_n \big\}_{n \in \mathbb{N} \setminus \{1\}, \ell \in\Z }
$
is an orthonormal basis for $L^2(\R)$. Specifically, the functions $g_n$ can be chosen to have the form
\begin{equation}\label{eq:gn}
\widehat{g_n}(\omega )=e^{-2\pi i t_n \omega}\mathds{1}_{[m_n,m_n+1)}(\omega),
\qquad \omega \in\R,
\end{equation}
where $m_n$ is given by \eqref{eq:partition} and $t_n\in\Z$.
\end{proposition}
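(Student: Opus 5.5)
Fix, for each $m\in\Z$, the infinite set $I_m\subseteq\{2,3,\ldots\}$ from \eqref{eq:partition}, and apply Lemma~\ref{lem:cosets} with $I=I_m$. This gives integers $t_n$, $n\in I_m$, with $\Z=\bigcupdot_{n\in I_m}(t_n+2^n\Z)$. Since the $I_m$ partition $\{2,3,\ldots\}$, this defines $t_n$ for every $n\geq2$, and we set $\widehat{g_n}$ as in \eqref{eq:gn}. Then $|\widehat g_n|\leq 1$ and $\|g_n\|_2=\|\widehat g_n\|_2=1$ by Plancherel. Because the sets $[m,m+1)$, $m\in\Z$, tile $\R$, we have $L^2(\R)=\bigoplus_{m\in\Z}\mathcal H_m$ orthogonally, where $\mathcal H_m=\{f\in L^2(\R):\operatorname{supp}\widehat f\subseteq[m,m+1)\}$. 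All translates $T_{2^n\ell}g_n$ with $n\in I_m$ lie in $\mathcal H_m$. It therefore suffices to show, for each fixed $m$, that $\{T_{2^n\ell}g_n\}_{n\in I_m,\ell\in\Z}$ is an orthonormal basis of $\mathcal H_m$.

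Fix $m$ and apply the Fourier transform. The map $f\mapsto \widehat f|_{[m,m+1)}$ is unitary from $\mathcal H_m$ onto $L^2([m,m+1))$, and it sends $T_{2^n\ell}g_n$ to
\[
\omega\mapsto e^{-2\pi i (t_n+2^n\ell)\omega}\mathds 1_{[m,m+1)}(\omega).
\]
As $n$ ranges over $I_m$ and $\ell$ over $\Z$, the integer $k=t_n+2^n\ell$ ranges over $\bigcupdot_{n\in I_m}(t_n+2^n\Z)=\Z$, and by the disjointness in \eqref{eq:cosets} each $k\in\Z$ is hit exactly once. Hence the image system is exactly $\{e^{-2\pi i k\omega}\mathds 1_{[m,m+1)}\}_{k\in\Z}$, each element listed once. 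This is the standard exponential orthonormal basis of $L^2$ of an interval of length one.

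Summing over $m$ then gives the orthonormal basis property for $L^2(\R)$. The only genuinely nontrivial input is Lemma~\ref{lem:cosets}, which we take as given; everything else is bookkeeping. The one point needing care is that the correspondence $(n,\ell)\mapsto t_n+2^n\ell$ must be a bijection onto $\Z$. Surjectivity comes from the covering in \eqref{eq:cosets}. Injectivity comes from the disjointness of the cosets for different $n$, together with the injectivity of $\ell\mapsto t_n+2^n\ell$ for fixed $n$. Injectivity is what rules out repeated elements, which would spoil orthonormality.
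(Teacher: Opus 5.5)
Your proposal is correct and follows the paper's proof: you choose the $t_n$ via Lemma~\ref{lem:cosets} on each $I_m$, note that on the Fourier side the translates with $n\in I_m$ are exactly the exponentials $e^{-2\pi i k\omega}\mathds{1}_{[m,m+1)}$, $k\in\Z$, and then combine over the orthogonal pieces indexed by the intervals $[m,m+1)$. Your explicit check that $(n,\ell)\mapsto t_n+2^n\ell$ is a bijection onto $\Z$ spells out what the paper's set identity leaves implicit, namely that no exponential appears twice.
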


\begin{proof}
For each $m\in\Z$, apply Lemma~\ref{lem:cosets} to $I_m$ and choose
$t_n \in \Z$, $n\in I_m$, so that
\[
\Z=\bigcupdot_{n\in I_m}(t_n+2^n\Z).
\]
For $n \in \mathbb{N} \setminus \{1\}$, define $g_n$ by \eqref{eq:gn}. Then $\|g_n\|_2=1$ and
$|\widehat {g_n}|\leq1$. For $n\in I_m$ and $\ell \in\Z$, a direct calculation gives
\[
\widehat{T_{2^n \ell}g_n}(\omega)
=e^{-2\pi i(t_n+2^n \ell)\omega}\mathds{1}_{[m,m+1)}(\omega).
\]
Note that 
\[ \big\{ e^{-2\pi i(t_n+2^n \ell)\omega}\mathds{1}_{[m,m+1)} \big\}_{n \in I_m, \ell \in \Z} = 
\big\{e^{-2 \pi i k \omega} \mathds{1}_{[m, m+1)} \big\}_{k \in \mathbb{Z}} ,
\]
which is an orthonormal basis for $L^2([m,m+1))$. Therefore, it follows that the union $\cup_{m \in \Z} \big\{\widehat{T_{2^n \ell }g_n} \big\}_{n \in I_m, \ell \in \Z} = \{T_{2^n \ell}g_n \}_{n \in \mathbb{N} \setminus \{1\}, \ell \in\Z } $ is an orthonormal basis for $L^2 (\R)$. 
\end{proof}

\section{Construction of the orthonormal wavelet}\label{sec:wavelet}
Fix $a\geq1$ and put $A_a :=\diag(2a,1/2)$.
For $n\in \mathbb{N} \setminus \{1\}$, define
\begin{equation}\label{eq:En}
E_n := \left\{\xi\in\R:
\frac{(2a)^{1-n}}2\leq|\xi|<\frac{(2a)^{2-n}}2\right\}.
\end{equation}
Note that the sets form a measurable partition of $(-1/2,1/2) \setminus \{0\}$, and
\begin{equation}\label{eq:En_measure}
|E_n|=\big((2a)^2-2a\big)(2a)^{-n}.
\end{equation}
Define $\psi^{(a)} \in L^2 (\R^2)$ through its  Fourier transform via 
\begin{equation}\label{eq:fourier_psi}
\widehat{\psi^{(a)}}(\omega,\eta)
=\sum_{n=2}^{\infty}2^{-n/2}\mathds{1}_{E_n}(\omega)
\widehat g_n(2^{-n}\eta), \quad (\omega, \eta) \in \R^2,
\end{equation}
where the functions $g_n \in L^2 (\R)$ are as in Proposition~\ref{prop:translation}. Taking the inverse Fourier transform in the second variable shows that the partial Fourier transform of $\psi^{(a)}$ is
\begin{equation}\label{eq:partial_psi}
(\mathcal{F}_x \psi^{(a)})(\omega,y)
=\sum_{n=2}^{\infty}\mathds{1}_{E_n}(\omega)\,2^{n/2}g_n(2^ny), \quad (\omega, y) \in \R^2,
\end{equation}
Note that, for every $\omega \in \R$, at most one summand is nonzero. Moreover, we have that
$\|2^{n/2}g_n(2^n\cdot)\|_2=1$ for $n \in \mathbb{N} \setminus \{1\}$, cf. Proposition~\ref{prop:translation}. Therefore, using that  the sets $E_n$ form a partition of $(-1/2, 1/2) \setminus \{0\}$,
\begin{equation}\label{eq:psi_norm}
\|\mathcal{F}_x \psi^{(a)}\|_{2}^2
=\sum_{n=2}^{\infty}|E_n|
=1.
\end{equation}
Thus, the function
$\psi^{(a)}\in L^2(\R^2)$ is of unit norm. 

We next show that the associated wavelet system satisfies Parseval's identity, and thus forms an orthonormal basis.

\begin{theorem}\label{thm:parseval}
Let $a \geq 1$ and $A_a := \diag(2a, 1/2)$. Let $\psi^{(a)} \in L^2 (\R^2)$ be defined as in \eqref{eq:fourier_psi}. Then the wavelet system
$
\big\{ a^{-j/2} \psi^{(a)} (A_a^{-j} \cdot -  \gamma) \big \}_{j \in \mathbb{Z}, \gamma \in  \mathbb{Z}^2}
$
is an orthonormal basis for
$L^2(\R^2)$ satisfying
\[
\sum_{j\in\Z}
\big|\widehat{\psi^{(a)}}((A_a^t)^j\xi)\big|^2
\leq\frac13
\quad\text{for a.e.}\quad \xi\in\R^2.
\]
\end{theorem}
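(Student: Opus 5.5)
The plan is to show Parseval's identity
\[
\sum_{j\in\Z}\sum_{\gamma\in\Z^2}|\langle f,\psi_{j,\gamma}\rangle|^2=\|f\|_2^2,\qquad f\in L^2(\R^2),
\]
where $\psi_{j,\gamma}:=a^{-j/2}\psi^{(a)}(A_a^{-j}\cdot-\gamma)$. Since $\|\psi_{j,\gamma}\|_2=\|\psi^{(a)}\|_2=1$ by \eqref{eq:psi_norm}, a Parseval frame of unit vectors is an orthonormal basis, so this suffices. Everything will be computed through the partial Fourier transform $\mathcal F_x$.

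\textbf{Step 1: the partial Fourier transform of the system.} Explicitly,
\[
\psi_{j,\gamma}(x,y)=a^{-j/2}\psi^{(a)}\big((2a)^{-j}x-\gamma_1,\;2^jy-\gamma_2\big).
\]
Applying $\mathcal F_x$ and using \eqref{eq:partial_psi}, I expect
\[
\mathcal F_x\psi_{j,\gamma}(\omega,y)=(2a)^{j/2}e^{-2\pi i (2a)^j\gamma_1\omega}\,\mathds{1}_{E_n}\big((2a)^j\omega\big)\,2^{(n+j)/2}g_n\big(2^{n}(2^jy-\gamma_2)\big)
\]
on the piece $(2a)^{-j}E_n$. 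Here the factors $a^{-j/2}$ and $(2a)^{j}$ combine with $2^{j/2}$ to give the displayed normalisation. This should be checked against the exact constants, but only the final $L^2$ normalisation matters.

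\textbf{Step 2: the sum over $\gamma_1$.} Fix $j$. The $\omega$-support $(2a)^{-j}\big((-1/2,1/2)\setminus\{0\}\big)$ has length $(2a)^{-j}$. The functions $(2a)^{j/2}e^{-2\pi i(2a)^j\gamma_1\omega}$, $\gamma_1\in\Z$, form an orthonormal basis of $L^2$ on that interval. By Plancherel in $x$ and this Fourier series Parseval identity,
\[
\sum_{\gamma_1}|\langle f,\psi_{j,\gamma}\rangle|^2=\sum_{n\ge2}\int_{(2a)^{-j}E_n}\big|\big\langle \mathcal F_xf(\omega,\cdot),\,h_{n,j,\gamma_2}\big\rangle\big|^2\,d\omega ,
\]
with $h_{n,j,\gamma_2}(y)=2^{(n+j)/2}g_n(2^{n+j}y-2^n\gamma_2)$.

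\textbf{Step 3: regrouping by $\omega$ (main bookkeeping step).} Set $\widetilde E_k:=\{(2a)^{1-k}/2\le|\omega|<(2a)^{2-k}/2\}$ for $k\in\Z$; these annuli partition $\R\setminus\{0\}$. Since $(2a)^{-j}E_n=\widetilde E_{n+j}$, a given $\omega\in\widetilde E_k$ is covered exactly by the pairs $(n,j)=(n,k-n)$ with $n\ge2$. For these pairs,
\[
h_{n,k-n,\gamma_2}=D_k\big(T_{2^n\gamma_2}g_n\big),\qquad D_kh:=2^{k/2}h(2^k\cdot),
\]
and $D_k$ is unitary. By Proposition~\ref{prop:translation}, the family $\{D_k T_{2^n\ell}g_n\}_{n\ge2,\ell\in\Z}$ is therefore an orthonormal basis of $L^2(\R)$. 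Summing over $n$ and $\gamma_2$ thus gives $\|\mathcal F_xf(\omega,\cdot)\|_2^2$ for a.e. $\omega$. Integrating over $\omega$ yields $\|f\|_2^2$, where Tonelli justifies the interchanges. The main obstacle is getting this reindexing $n+j=k$ exactly right, including the direction of the dilation; everything else is routine.

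\textbf{Step 4: the Calderón sum.} For $\xi=(\omega,\eta)$ with $\omega\in\widetilde E_k$,
\[
\widehat{\psi^{(a)}}\big((A_a^t)^j\xi\big)=2^{-n/2}\,\widehat g_n(2^{-k}\eta)\qquad\text{with } n=k+j\ \ (n\ge2),
\]
and it vanishes for other $j$. Note the argument $2^{-n}2^{-j}\eta=2^{-k}\eta$ is the same for all $j$. Using \eqref{eq:gn}, the sum equals $\sum_{n\ge2}2^{-n}\mathds{1}_{[m_n,m_n+1)}(2^{-k}\eta)$. Only the $n\in I_m$ with $m=\lfloor 2^{-k}\eta\rfloor$ contribute, so the sum is at most $\sup_m\sum_{n\in I_m}2^{-n}$. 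By \eqref{eq:partition_explicit}:
\begin{itemize}
\item for $I_0=2\N$ this is $\sum_{q\ge1}4^{-q}=1/3$;
\item for $r\ge1$ it is at most $\sum_{q\ge0}2^{-1-2^r(2q+1)}\le 2^{-3}\cdot\frac{4}{3}<1/3$.
\end{itemize}
This gives the bound $1/3$ for a.e. $\xi$.
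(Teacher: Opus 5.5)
Your proposal is correct and follows essentially the same route as the paper. You take the partial Fourier transform, apply Fourier-series Parseval in the first translation index on the interval of length $(2a)^{-j}$, regroup by the shell on which $n+j$ is fixed, and reduce to the orthonormal basis $\{T_{2^n\ell}g_n\}$. You dilate that basis by the unitary $D_k$ where the paper rescales the fiber $F_\omega$, which amounts to the same thing, and the Calder\'on-sum computation is identical. One small slip: in Step 4 the relation should read $n=k-j$, not $n=k+j$, which is what Step 3 and your identity $2^{-n}2^{-j}\eta=2^{-k}\eta$ already use.
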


\begin{proof}
We split the proof into two steps.
\\~\\
\textbf{Step 1.}
Throughout this step, let $f\in L^2(\R^2)$ and put $F := \mathcal{F}_x f$. 
For $j,k,\ell \in\Z$, we write
\begin{align*}\label{eq:wavelet_atoms}
\psi^{(a)}_{j,k,\ell}(x,y) &=
|\det(A_a)|^{-j/2} \psi^{(a)} \bigg(A_a^{-j} \begin{pmatrix}
x \\ y
\end{pmatrix}
- 
\begin{pmatrix}
k \\ \ell
\end{pmatrix}
\bigg) \\
&=a^{-j/2}\psi^{(a)}\big((2a)^{-j}x-k,\,2^jy-\ell\big)
\end{align*}
for $(x,y) \in \R^2$. A change of variables in the first
coordinate gives
\begin{equation}\label{eq:partial_atom}
(\mathcal{F}_x \psi^{(a)}_{j,k,\ell})(\omega,y)
=(2a)^j a^{-j/2}e^{-2\pi i k(2a)^j\omega}
(\mathcal{F}_x \psi^{(a)})\big((2a)^j\omega,2^jy-\ell \big).
\end{equation}
Therefore, by Plancherel's theorem,
\begin{align*}
\langle f,\psi^{(a)}_{j,k,\ell}\rangle
=(2a)^j a^{-j/2}
\int_{\R}\int_{\R}
F(\omega,y)
\overline{(\mathcal{F}_x\psi^{(a)})((2a)^j\omega,2^jy-\ell)}
e^{2\pi i k(2a)^j\omega}\,dy\,d\omega.
\end{align*}

For fixed $j,\ell\in\Z$, set
\begin{equation}\label{eq:H}
H_{j,\ell}(\omega) := \int_{\R}F(\omega,y)
\overline{(\mathcal{F}_x \psi^{(a)} )\big((2a)^j\omega,2^jy-\ell \big)}\dif y.
\end{equation}
This integral is well defined for almost every $\omega$ and vanishes outside the interval
\[
J_j :=\left(-\frac{(2a)^{-j}}2,\frac{(2a)^{-j}}2\right),
\]
because $(\mathcal{F}_x \psi^{(a)}) ((2a^j) \omega, 2^j - \ell) = 0$ when $(2a)^j \omega \in (-1/2, 1/2) \setminus \{0\}$, while, for almost every $\omega \in J_j \setminus \{0\}$, there is a unique $n\geq2$ such that
$(2a)^j\omega\in E_n$. Furthermore, note that \eqref{eq:partial_psi} implies
\[
\big\|\mathcal{F}_x \psi^{(a)} \big((2a)^j\omega,2^j\cdot-\ell \big)\big\|_2^2
=2^n\int_{\R}|g_n(2^{n+j}y-2^n \ell)|^2\dif y=2^{-j}.
\]
Hence, an application of the Cauchy-Schwarz inequality yields
\begin{equation}\label{eq:H_estimate}
|H_{j,\ell}(\omega)|^2\leq2^{-j}\mathds{1}_{J_j}(\omega)
\int_{\R}|F(\omega,y)|^2\dif y.
\end{equation}
In particular, $H_{j,\ell}\in L^2(J_j)\subseteq L^1(J_j)$.

An application of Plancherel's theorem and Fubini's theorem allows us therefore to write
\[
\langle f,\psi^{(a)}_{j,k,\ell}\rangle
=(2a)^j a^{-j/2}\int_{J_j}H_{j,\ell}(\xi)
 e^{2\pi i k(2a)^j\xi}\dif\xi.
\]
Note that the system $\big\{ (2a)^{j/2} e^{-2\pi i k(2a)^j\omega} \big\}_{k\in \Z}$ forms an
orthonormal basis for $L^2(J_j)$. Hence,
an application of Parseval's identity gives
\begin{align}\label{eq:parseval_k}
\sum_{k\in\Z}|\langle f,\psi^{(a)}_{j,k,\ell}\rangle|^2
=(2a)^{2j}a^{-j}(2a)^{-j}
\int_{\R}|H_{j,\ell}(\xi)|^2\dif\omega 
=2^j\int_{\R}|H_{j,\ell}(\omega)|^2\dif\omega.
\end{align}

For fixed $\omega\neq0$, let $s=s(\omega)\in\Z$ be the unique integer
such that
\begin{equation}\label{eq:shell}
\frac{2a}{2}\leq(2a)^s|\omega|<\frac{(2a)^2}{2}.
\end{equation}
By \eqref{eq:En}, for $j\in\Z$ and $n\geq2$,
we have that
$(2a)^j\omega\in E_n
$ if and only if $ j+n=s$.
Thus, $H_{j,\ell}(\omega)=0$ when $j>s-2$.  
For
$j\leq s-2$, set $n:=s-j \in \mathbb{N} \setminus \{1\}$, so that $(2a)^j \omega \in E_n$.
Substituting \eqref{eq:partial_psi} into \eqref{eq:H}, and using
$n+j=s$, yields
\[
H_{j,\ell}(\omega)
=2^{n/2}\int_{\R}F(\omega,y)
\overline{g_n(2^{n+j}y-2^n \ell)}\,dy
=2^{n/2}\int_{\R}F(\omega,y)
\overline{g_n(2^sy-2^n\ell)}\,dy.
\]
The change of variable  $u=2^sy$ next yields
\begin{align*}
H_{j,\ell}(\omega)
&=2^{n/2-s}\int_{\R}F(\omega,2^{-s}u)
\overline{g_n(u-2^n \ell)}\,du
=2^{(n-s)/2}\int_{\R}F_\omega (u)
\overline{g_n(u-2^n \ell)}\,du,
\end{align*}
where $F_\omega \in L^2(\R)$, for almost all $\omega \in \R$, is defined by $F_\omega(u) :=2^{-s/2}F(\omega,2^{-s}u)$ for $u \in \R$. Note that
\begin{equation}\label{eq:fiber_norm}
\|F_\omega \|_2^2
=2^{-s}\int_{\R}|F(\omega,2^{-s}u)|^2\,du
=\int_{\R}|F(\omega,y)|^2\,dy.
\end{equation} 
Thus, the above shows that
$
H_{j,\ell} (\omega) = 2^{-j/2}\langle F_\omega,T_{2^n \ell}g_n\rangle $.
As $j$ ranges over the integers satisfying $j\leq s-2$,
the index $n=s-j$ ranges over all integers $n\geq2$.
Consequently, using that $H_{j,\ell} (\omega) = 0$ for $j > s-2$,
\begin{align}
\sum_{j,\ell\in\Z}2^j|H_{j,\ell}(\omega)|^2
&=\sum_{j\leq s-2}\sum_{\ell\in\Z}
|\langle F_\omega,T_{2^{s-j}\ell}g_{s-j}\rangle|^2\notag
=\sum_{n=2}^{\infty}\sum_{\ell\in\Z}
|\langle F_\omega,T_{2^n\ell}g_n\rangle|^2 \\
&=\|F_\omega\|_2^2,
\label{eq:parseval_fiber}
\end{align}
where the last step used Parseval's identity for the orthonormal
basis $\{T_{2^n \ell}g_n \}_{n \in \N \setminus \{1\}, \ell \in \mathbb{N}}$, cf. Proposition~\ref{prop:translation}.

Lastly, a combination of \eqref{eq:parseval_k},   \eqref{eq:parseval_fiber} and \eqref{eq:fiber_norm} yields
\begin{align*}
\sum_{j,k,\ell\in\Z}
|\langle f,\psi^{(a)}_{j,k,\ell}\rangle|^2
&=\int_{\R}\sum_{j,\ell\in\Z}2^j|H_{j,\ell}(\omega)|^2\,d\omega =\int_{\R}\|F_\omega \|_2^2\,d\omega =\int_{\R}\int_{\R}|F(\omega,y)|^2\,dy\,d\omega \\
&=\|f\|_{2}^2.
\end{align*}
This shows that $
\big\{ a^{-j/2} \psi^{(a)} (A_a^{-j} \cdot -  \gamma) \big \}_{j \in \mathbb{Z}, \gamma \in  \mathbb{Z}^2}
$ is a Parseval frame for $L^2 (\R^2)$. Since $\psi^{(a)} \in L^2 (\R^2)$ has norm one, it follows that it must be an orthonormal basis.
\\~\\
\textbf{Step 2.} We next show the claim on the Calder\'on sum. For
$(\omega,\eta)\in\R^2$ with $\omega\neq0$, let $s=s(\omega) \in \Z$ be given
by \eqref{eq:shell}. As above, by \eqref{eq:En}, the condition
$(2a)^j\omega\in E_n$ is equivalent to $j+n=s$. Thus, if $j > s-2$, then 
$
\widehat{\psi^{(a)}}((2a)^j\omega,2^{-j}\eta)=0
$.
For $j\leq s-2$, set $n:=s-j\geq2$. Then
\eqref{eq:fourier_psi} gives
\[
\widehat{\psi^{(a)}}((2a)^j\omega,2^{-j}\eta)
=
2^{-n/2}\widehat g_n(2^{-n}2^{-j}\eta)
=
2^{-n/2}\widehat g_n(2^{-s}\eta).
\]
Therefore,
\begin{align*}
\sum_{j\in\Z}
\big|\widehat{\psi^{(a)}}((2a)^j\omega,2^{-j}\eta)\big|^2
&=
\sum_{n=2}^{\infty}
2^{-n}|\widehat g_n(2^{-s}\eta)|^2 \\
&=
\sum_{n=2}^{\infty}
2^{-n}\mathds{1}_{[m_n,m_n+1)}(2^{-s}\eta),
\end{align*}
where the last equality follows from \eqref{eq:gn}.

Lastly, let $m = m(s, \eta) \in\Z$ be the unique integer such that
$2^{-s}\eta\in[m,m+1)$. Then the above identity shows that
\[ \sum_{j \in \Z} |\widehat{\psi^{(a)}} \big((A^t_a)^j 
\begin{pmatrix}
\omega \\
\eta
\end{pmatrix} \big) |^2 = 
\sum_{j\in\Z}
\big|\widehat{\psi^{(a)}}((2a)^j\omega,2^{-j}\eta)\big|^2 = \sum_{n\in I_m}2^{-n}.
\]
On the one hand, if $m=0$, then $I_0=2 \mathbb{N}$, so that
\[
\sum_{n\in I_0}2^{-n}
=\sum_{r=1}^{\infty}2^{-2r}
=\frac13.
\]
On the other hand, if $m\neq0$, then $I_m$ consists of odd integers greater than or equal to $3$,
and therefore
\[
\sum_{n\in I_m}2^{-n}
\leq\sum_{r=1}^{\infty}2^{-(2r+1)}
=\frac16.
\]
In combination, this shows the claim.
\end{proof}

Lastly, we explain that the pair $(A_a, \Z^2)$ does not admit a wavelet set. For $a>1$, we use the
two-dimensional characterization of Ionascu and Wang
\cite[Theorem~1.3]{ionascu2006simultaneous}, in the form stated in
\cite[Theorem~1.3]{bownik2026simultaneous}: if $A\in\mathrm{GL}(2,\R)$ has
$|\det(A)|>1$ and eigenvalues $\lambda_1,\lambda_2$ with
$|\lambda_1|\geq|\lambda_2|$, then $(A,\Z^2)$ admits a wavelet set if and
only if either $|\lambda_2|\geq1$, or
\[
|\lambda_2|<1\quad\text{and}\quad
\ker(A-\lambda_2 I)\cap\Z^2 =\{0\}.
\]

This easily gives the result.

\begin{proposition}\label{prop:no_wavelet_set}
For each $a \geq 1$, the pair $(A_a,\Z^2)$ does not admit a wavelet set.
\end{proposition}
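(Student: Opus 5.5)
We split into the cases $a>1$ and $a=1$, since the Ionascu--Wang characterization recalled above only applies when $|\det(A)|>1$.

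\emph{Case $a>1$.} Here $|\det(A_a)| = 2a\cdot\tfrac12 = a>1$. The eigenvalues of the diagonal matrix $A_a$ are $\lambda_1 = 2a$ and $\lambda_2 = 1/2$, ordered so that $|\lambda_1|\geq|\lambda_2|$. Since $|\lambda_2| = 1/2 <1$, the first alternative $|\lambda_2|\geq1$ fails. By the criterion of \cite[Theorem~1.3]{ionascu2006simultaneous} (as stated in \cite[Theorem~1.3]{bownik2026simultaneous}), $(A_a,\Z^2)$ admits a wavelet set only if $\ker(A_a-\tfrac12 I)\cap\Z^2=\{0\}$. But $\ker(A_a-\tfrac12 I)$ is spanned by $e_2=(0,1)^t$, because $2a\neq 1/2$ when $a\geq1$. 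Hence $e_2\in\ker(A_a-\tfrac12 I)\cap\Z^2\setminus\{0\}$, so the second alternative fails as well, and no wavelet set exists. We also note that $A_a^t=A_a$, so it does not matter whether the wavelet set is taken for $A_a$ or for its transpose.

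\emph{Case $a=1$.} Here $|\det(A_1)|=1$ and the criterion above does not apply. We give a direct volume argument, as in \cite[Section 3]{laugesen2002characterization} and \cite[Theorem 4]{larson2006explicit}, which we could also simply cite. Suppose $W$ were a wavelet set. Since $\{W+\lambda\}_{\lambda\in\Z^2}$ tiles $\R^2$, we get $|W|=1$. Since $\{A_1^jW\}_{j\in\Z}$ tiles $\R^2$ and $|A_1^jW|=|W|$ for every $j$, we get
\[
\infty=|\R^2|=\sum_{j\in\Z}|A_1^jW|=\sum_{j\in\Z}|W|.
\]
This alone gives no contradiction. Instead, we use that the translation tiling gives $|W\cap K|\leq$ const for bounded $K$, and compare $W$ with the intersection of the dilation tiling with a bounded set such as the unit square $Q=[0,1)^2$. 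We have
\[
1=|Q|=\sum_{j\in\Z}|A_1^jW\cap Q|=\sum_{j\in\Z}|W\cap A_1^{-j}Q|,
\]
and the sets $A_1^{-j}Q$ are rectangles of area one, each containing the origin in its closure. Following the standard argument, we sum over a countable cover of $\R^2$ by translates of $Q$ and use that $W$ meets each integer translate class exactly once. We expect this bookkeeping to force $|W|=0$ or $|W|=\infty$, contradicting $|W|=1$.

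The only genuinely nontrivial point is the $a=1$ case, and we would just cite \cite[Theorem 4]{larson2006explicit}, which asserts that determinant-one matrices admit no wavelet sets for any full-rank lattice. The $a>1$ case is a direct verification of the eigenvector condition.
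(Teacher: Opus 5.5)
Your proof is essentially the paper's: for $a>1$ you check, as the paper does, that $\lambda_2=1/2<1$ and $\ker(A_a-\tfrac12 I)\cap\Z^2=\{0\}\times\Z\neq\{0\}$, so the Ionascu--Wang criterion in the form \cite[Theorem~1.3]{bownik2026simultaneous} excludes a wavelet set, and for $a=1$ you end up citing \cite[Theorem 4]{larson2006explicit}, which is also what the paper does. Your sketched direct volume argument for $a=1$ is not a proof as written, since summing over translates of $Q$ only gives back $\infty=\sum_j|W|$; either drop it and keep the citation, or replace it with the standard scaling argument, which goes as follows. Because $A_1$ preserves measure, any two measurable fundamental domains $W,D$ of $\langle A_1\rangle$ satisfy $|W|=\sum_j|W\cap A_1^jD|=\sum_j|A_1^{-j}W\cap D|=|D|$. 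For every $c>0$ the set $cW$ is again such a fundamental domain, so $|W|=c^2|W|$, which forces $|W|\in\{0,\infty\}$ and contradicts $|W|=1$.
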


\begin{proof}
If $a = 1$, then $\det(A_a) = 1$, and hence it cannot admit a wavelet set by, e.g., \cite[Theorem 4]{larson2006explicit}.
If $a>1$, then the eigenvalues of $A_a$ are $\lambda_1 = 2a$ and $\lambda_2 = 1/2$, and
\[
\ker\big(A_a -\tfrac12 I\big)\cap\Z^2=\{0\}\times\Z\neq\{0\}.
\]
Hence, \cite[Theorem~1.3]{bownik2026simultaneous} implies that $(A_a,\Z^2)$ does not admit a
wavelet set.
\end{proof}

A combination of Theorem~\ref{thm:parseval} and Proposition \ref{prop:no_wavelet_set} directly yields Theorem \ref{thm:main}.

\section*{Tool and computational resource disclosure}
Large language models (LLMs), specifically ChatGPT 5.6 Sol and ChatGPT 6 Astra, played a substantial role in the development of this work. The authors had previously considered adapting the known counterexamples to Calder\'on-type identities for generalized shift-invariant systems \cite{bownik2004spectral} to the setting of wavelets, but had not succeeded in carrying out this adaptation.
The LLMs were used to turn this idea into the explicit wavelet construction presented here.
The initial construction led to a wavelet orthonormal basis for the dilation matrix $\diag(3, 1/3)$ of determinant one. For such a matrix, neither a function satisfying the
Calder\'on sum formula nor a wavelet set can exist, so this already provided a counterexample to both Conjecture \ref{conj:wavelet} and Conjecture \ref{con:waveletset}. We then used LLMs to investigate whether this initial example could be adapted to the case $|\det(A)| \neq 1$, which led to the present Theorem \ref{thm:main}. 

The authors have independently verified, validated, and rewritten all portions of the paper influenced by LLM-generated material, and assume full responsibility for the mathematical content.
 
 \section*{Acknowledgements}
 For J.v.V., this research was funded in whole or in part by the Austrian Science Fund (FWF): 10.55776/PAT2545623. For open access purposes, the author has applied a CC BY public copyright license to any author-accepted manuscript version arising from this submission.  F.V. acknowledges support by the German Science Foundation (DFG) in the context of the Emmy Noether junior
research group VO 2594/1-1 and the Hightech Agenda Bavaria.

\bibliographystyle{abbrv}
\bibliography{bibl}

@article{rzeszotnik2001calderon,
 author = {Rzeszotnik, Ziemowit},
 title = {Calder{\'o}n's condition and wavelets},
 fjournal = {Collectanea Mathematica},
 journal = {Collect. Math.},
 issn = {0010-0757},
 volume = {52},
 number = {2},
 pages = {181--191},
 year = {2001},
 language = {English},
 url = {https://eudml.org/doc/42731},
 zbMATH = {1655778},
 Zbl = {0989.42017}
}

@book{dai1998wandering,
 author = {Dai, Xingde and Larson, David R.},
 title = {Wandering vectors for unitary systems and orthogonal wavelets},
 fseries = {Memoirs of the American Mathematical Society},
 series = {Mem. Am. Math. Soc.},
 issn = {0065-9266},
 volume = {640},
 isbn = {978-0-8218-0800-9; 978-1-4704-0229-7},
 year = {1998},
 publisher = {Providence, RI: American Mathematical Society (AMS)},
 language = {English},
 doi = {10.1090/memo/0640},
 zbMATH = {1212343},
 Zbl = {0990.42022}
}

@incollection{larson2006explicit,
 author = {Larson, David and Schulz, Eckart and Speegle, Darrin and Taylor, Keith F.},
 title = {Explicit cross-sections of singly generated group actions},
 booktitle = {Harmonic analysis and applications. In Honor of John J. Benedetto},
 isbn = {0-8176-3778-8},
 pages = {209--230},
 year = {2006},
 publisher = {Basel: Birkh{\"a}user},
 language = {English},
 zbMATH = {5064380},
 Zbl = {1129.42437}
}

@incollection{bownik2004spectral,
 author = {Bownik, Marcin and Rzeszotnik, Ziemowit},
 title = {The spectral function of shift-invariant spaces on general lattices},
 booktitle = {Wavelets, frames and operator theory. Papers from the Focused Research Group Workshop, University of Maryland, College Park, MD, USA, January 15--21, 2003},
 isbn = {0-8218-3380-4},
 pages = {49--59},
 year = {2004},
 publisher = {Providence, RI: American Mathematical Society (AMS)},
 language = {English},
 zbMATH = {2144538},
 Zbl = {1083.42025}
}

@article{wang2002wavelets,
 author = {Wang, Yang},
 title = {Wavelets, tiling, and spectral sets},
 fjournal = {Duke Mathematical Journal},
 journal = {Duke Math. J.},
 issn = {0012-7094},
 volume = {114},
 number = {1},
 pages = {43--57},
 year = {2002},
 language = {English},
 doi = {10.1215/S0012-7094-02-11413-6},
 zbMATH = {1820918},
 Zbl = {1011.42024}
}

@article{ionascu2006simultaneous,
 author = {Ionascu, Eugen J. and Wang, Yang},
 title = {Simultaneous translational and multiplicative tiling and wavelet sets in $\mathbb{R}^2$},
 fjournal = {Indiana University Mathematics Journal},
 journal = {Indiana Univ. Math. J.},
 issn = {0022-2518},
 volume = {55},
 number = {6},
 pages = {1935--1949},
 year = {2006},
 language = {English},
 doi = {10.1512/iumj.2006.55.2967},
 zbMATH = {5125666},
 Zbl = {1145.42008}
}

@article{fuehr2019system,
 author = {F{\"u}hr, Hartmut and Lemvig, Jakob},
 title = {System bandwidth and the existence of generalized shift-invariant frames},
 fjournal = {Journal of Functional Analysis},
 journal = {J. Funct. Anal.},
 issn = {0022-1236},
 volume = {276},
 number = {2},
 pages = {563--601},
 year = {2019},
 language = {English},
 doi = {10.1016/j.jfa.2018.10.014},
 zbMATH = {6988449},
 Zbl = {1407.42023}
}

@misc{enstad2026calderon,
 author = {Ulrik Enstad and Jordy Timo van Velthoven},
 title = {On the {Calder{\'o}n} sum formula for wavelet systems},
 year = {2026},
 howpublished = {Preprint, {arXiv}:2602.10766},
 url = {https://arxiv.org/abs/2602.10766},
 arXiv = {arXiv:2602.10766}
}

@article{bownik2026msf,
 author = {Bownik, Marcin and Rzeszotnik, Ziemowit and Speegle, Darrin},
 title = {Are {MSF} wavelets minimally supported?},
 fjournal = {Expositiones Mathematicae},
 journal = {Expo. Math.},
 issn = {0723-0869},
 volume = {44},
 number = {2},
 pages = {16},
 note = {Id/No 125768},
 year = {2026},
 language = {English},
 doi = {10.1016/j.exmath.2026.125768},
 zbMATH = {8174986}
}

@incollection{larson2007unitary,
 author = {Larson, David R.},
 title = {Unitary systems, wavelet sets, and operator-theoretic interpolation of wavelets and frames},
 booktitle = {Gabor and wavelet frames},
 isbn = {978-981-270-907-3},
 pages = {167--214},
 year = {2007},
 publisher = {Hackensack, NJ: World Scientific},
 language = {English},
 zbMATH = {5222516},
 Zbl = {1134.42340}
}

@article{bownik2026simultaneous,
 author = {Bownik, Marcin and Speegle, Darrin},
 title = {Simultaneous dilation and translation tilings of {{\(\mathbb{R}^n\)}}},
 fjournal = {American Journal of Mathematics},
 journal = {Am. J. Math.},
 issn = {0002-9327},
 volume = {148},
 number = {2},
 pages = {473--503},
 year = {2026},
 language = {English},
 doi = {10.1353/ajm.2026.a986598},
 zbMATH = {8183842}
}

@article{laugesen2002translational,
 author = {Laugesen, Richard S.},
 title = {Translational averaging for completeness, characterization and oversampling of wavelets.},
 fjournal = {Collectanea Mathematica},
 journal = {Collect. Math.},
 issn = {0010-0757},
 volume = {53},
 number = {3},
 pages = {211--249},
 year = {2002},
 language = {English},
 url = {https://eudml.org/doc/42924},
 zbMATH = {1943355},
 Zbl = {1035.42035}
}

@article{calogero2000characterization,
 author = {Calogero, A.},
 title = {A characterization of wavelets on general lattices.},
 fjournal = {The Journal of Geometric Analysis},
 journal = {J. Geom. Anal.},
 issn = {1050-6926},
 volume = {10},
 number = {4},
 pages = {597--622},
 year = {2000},
 language = {English},
 doi = {10.1007/BF02921988},
 zbMATH = {1679938},
 Zbl = {1057.42025}
}

@article{bownik2000characterization,
 author = {Bownik, Marcin},
 title = {A characterization of affine dual frames in {{\(L^2(\mathbb{R}^n)\)}}},
 fjournal = {Applied and Computational Harmonic Analysis},
 journal = {Appl. Comput. Harmon. Anal.},
 issn = {1063-5203},
 volume = {8},
 number = {2},
 pages = {203--221},
 year = {2000},
 language = {English},
 doi = {10.1006/acha.2000.0284},
 zbMATH = {1437487},
 Zbl = {0961.42018}
}

@article{chui2002characterization,
 author = {Chui, Charles K. and Czaja, Wojciech and Maggioni, Mauro and Weiss, Guido},
 title = {Characterization of general tight wavelet frames with matrix dilations and tightness preserving oversampling},
 fjournal = {The Journal of Fourier Analysis and Applications},
 journal = {J. Fourier Anal. Appl.},
 issn = {1069-5869},
 volume = {8},
 number = {2},
 pages = {173--200},
 year = {2002},
 language = {English},
 doi = {10.1007/s00041-002-0007-4},
 zbMATH = {1761903},
 Zbl = {1005.42020}
}

@article{hernandez2002unified,
 author = {Hern{\'a}ndez, Eugenio and Labate, Demetrio and Weiss, Guido},
 title = {A unified characterization of reproducing systems generated by a finite family. {II}},
 fjournal = {The Journal of Geometric Analysis},
 journal = {J. Geom. Anal.},
 issn = {1050-6926},
 volume = {12},
 number = {4},
 pages = {615--662},
 year = {2002},
 language = {English},
 doi = {10.1007/BF02930656},
 url = {www.lib.ncsu.edu/resolver/1840.2/495},
 zbMATH = {1983423},
 Zbl = {1039.42032}
}

@article{speegle2003existence,
 author = {Speegle, Darrin},
 title = {On the existence of wavelets for non-expansive dilation matrices},
 fjournal = {Collectanea Mathematica},
 journal = {Collect. Math.},
 issn = {0010-0757},
 volume = {54},
 number = {2},
 pages = {163--179},
 year = {2003},
 language = {English},
 url = {https://eudml.org/doc/43075},
 zbMATH = {2162763},
 Zbl = {1062.42030}
}

@article{laugesen2002characterization,
 author = {Laugesen, R. S. and Weaver, N. and Weiss, G. L. and Wilson, E. N.},
 title = {A characterization of the higher dimensional groups associated with continuous wavelets.},
 fjournal = {The Journal of Geometric Analysis},
 journal = {J. Geom. Anal.},
 issn = {1050-6926},
 volume = {12},
 number = {1},
 pages = {89--102},
 year = {2002},
 language = {English},
 doi = {10.1007/BF02930862},
 zbMATH = {1777197},
 Zbl = {1043.42032}
}

@article{bownik2017wavelets,
 author = {Bownik, Marcin and Lemvig, Jakob},
 title = {Wavelets for non-expanding dilations and the lattice counting estimate},
 fjournal = {IMRN. International Mathematics Research Notices},
 journal = {Int. Math. Res. Not.},
 issn = {1073-7928},
 volume = {2017},
 number = {23},
 pages = {7264--7291},
 year = {2017},
 language = {English},
 doi = {10.1093/imrn/rnw222},
 zbMATH = {7004503},
 Zbl = {1405.42059}
}

@incollection{bownik2020open,
 author = {Bownik, Marcin and Rzeszotnik, Ziemowit},
 title = {Open problems in wavelet theory},
 booktitle = {Operator theory, operator algebras and their interactions with geometry and topology. Ronald G. Douglas memorial volume. Proceedings of the international workshop on operator theory and applications (IWOTA 2018), Shanghai, China, July 23--27, 2018},
 isbn = {978-3-030-43379-6; 978-3-030-43382-6; 978-3-030-43380-2},
 pages = {77--100},
 year = {2020},
 publisher = {Cham: Birkh{\"a}user},
 language = {English},
 doi = {10.1007/978-3-030-43380-2_4},
 zbMATH = {7299674},
 Zbl = {1459.42050}
}

\end{document}